\begin{document}

\theoremstyle{plain}
\newtheorem{theorem}{Theorem}[section]
\newtheorem{proposition}[theorem]{Proposition}
\newtheorem{lemma}[theorem]{Lemma}
\newtheorem{corollary}[theorem]{Corollary}
\newtheorem{conj}[theorem]{Conjecture}

\theoremstyle{definition}
\newtheorem{definition}[theorem]{Definition}
\newtheorem{exam}[theorem]{Example}
\newtheorem{remark}[theorem]{Remark}

\numberwithin{equation}{section}

\title[$m$-quasi-Einstein metrics on quadratic Lie groups]
{Non-trivial $m$-quasi-Einstein metrics on quadratic Lie groups}

\author{Zhiqi Chen}
\address{School of Mathematical Sciences and LPMC \\ Nankai University \\ Tianjin 300071, P.R. China} \email{chenzhiqi@nankai.edu.cn}

\author{Ke Liang}
\address{School of Mathematical Sciences and LPMC \\ Nankai University \\ Tianjin 300071, P.R. China}\email{liangke@nankai.edu.cn}

\author{Fahuai Yi}
\address{School of Mathematical Sciences\\ South China Normal University\\ Guangzhou 510631, P.R. China}\email{fhyi@scnu.edu.cn}

\subjclass[2010]{Primary 53C25, 53C20, 53C21; Secondary 17B30.}

\keywords{$m$-quasi-Einstein metric, left-invariant metric, quadratic Lie group, quadratic Lie algebra, Killing field.}

\begin{abstract}
We call a metric $m$-quasi-Einstein if $Ric_X^m$ (a modification of the $m$-Bakry-Emery Ricci tensor in terms of a suitable vector field $X$) is a constant multiple of the metric tensor. It is a generalization of Einstein
metrics which contains Ricci solitons. In this paper, we focus on left-invariant vector fields and left-invariant Riemannian metrics on quadratic Lie groups. First we prove that any left-invariant vector field $X$ such that the left-invariant Riemannian metric
on a quadratic Lie group is $m$-quasi-Einstein is a Killing field. Then we construct infinitely many non-trivial $m$-quasi-Einstein metrics on solvable quadratic Lie groups $G(n)$ for $m$ finite.
\end{abstract}

\maketitle


\setcounter{section}{0}
\section{Introduction}
A natural extension of the Ricci tensor is the $m$-Bakry-Emery Ricci tensor
\begin{equation}
\mathrm{Ric}^m_f=\mathrm{Ric}+\nabla^2f-\frac{1}{m}df\otimes df
\end{equation}
where $0<m\leqslant \infty$, $f$ is a smooth function on $M^n$, and $\nabla^2f$ stands for the Hessian form. Instead of a gradient of a smooth function $f$ by a vector
field $X$, $m$-Bakry-Emery Ricci tensor was extended by Barros and Ribeiro Jr in \cite{BR1} and
Limoncu in \cite{L1} for an arbitrary vector field $X$ on $M^n$ as follows:
\begin{equation}
\mathrm{Ric}^m_X=\mathrm{Ric}+\frac{1}{2}{\mathfrak L}_Xg-\frac{1}{m}X^*\otimes X^*
\end{equation}
where ${\mathfrak L}_Xg$ denotes the Lie derivative on $M^n$ and $X^*$ denotes the canonical 1-form
associated to $X$. With this setting $(M^n,g)$ is called an $m$-quasi-Einstein metric, if there exist a vector
field $X\in {\mathfrak X}(M^n)$ and constants $m$ and $\lambda$ such that
\begin{equation}\label{qEin}
\mathrm{Ric}^m_X=\lambda g.\end{equation}

An $m$-quasi-Einstein metric is called trivial when $X\equiv 0$. The triviality definition is equivalent to say that $M^n$ is an Einstein manifold.
When $m=\infty$, the equation (1.3) reduces to a Ricci soliton, for more details see \cite{Ca1} and the references therein. If $m$ is a positive integer and $X$ is a gradient vector field, the condition corresponds to a warped product Einstein metric, for more details see \cite{HPW1}. Classically the study on $m$-quasi-Einstein are considered when $X$ is a gradient of a smooth function $f$ on $M^n$, see \cite{An1,AK1,CSW1,Cor1,ELM1,KK1}.

In this paper, we focus on left-invariant Riemannian metrics on quadratic Lie groups which include compact Lie groups and semisimple Lie groups as special classes. First, we have the following theorem.
\begin{theorem}\label{killingfield}
Let $G$ be a quadratic Lie group with a left-invariant Riemannian metric $\langle\cdot,\cdot\rangle$ and the Lie algebra $\mathfrak g$ of left-invariant vector fields. If $X\in {\mathfrak g}$ and $\mathrm{Ric}^m_X=\lambda\langle\cdot,\cdot\rangle$, then $X$ is a Killing field.
\end{theorem}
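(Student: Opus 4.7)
The plan is to reduce the $m$-quasi-Einstein condition to an algebraic identity on $\mathfrak{g}$ and then exploit the ad-invariant form $B$ of the quadratic structure to force the symmetric part of $\mathrm{ad}_X$ (with respect to $\langle\cdot,\cdot\rangle$) to vanish. Since $X$ and $\langle\cdot,\cdot\rangle$ are left-invariant, so are $\mathrm{Ric}$, $\mathcal{L}_X\langle\cdot,\cdot\rangle$, and $X^*\otimes X^*$, so the equation reduces to an identity at $e\in G$. The Koszul formula for left-invariant fields gives
\[
\mathcal{L}_X\langle\cdot,\cdot\rangle(Y,Z)=-\langle[X,Y],Z\rangle-\langle Y,[X,Z]\rangle,
\]
so $X$ is Killing iff $\mathrm{ad}_X+\mathrm{ad}_X^*=0$, where $\mathrm{ad}_X^*$ denotes the $\langle\cdot,\cdot\rangle$-adjoint. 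Defining $\phi\in\mathrm{End}(\mathfrak{g})$ by $\langle Y,Z\rangle=B(\phi Y,Z)$, the $B$-skewness of each $\mathrm{ad}_Y$ yields $\mathrm{ad}_X^*=-\phi^{-1}\mathrm{ad}_X\phi$; in particular $\phi$ is both $B$- and $\langle\cdot,\cdot\rangle$-symmetric, and the Killing condition translates to $[\phi,\mathrm{ad}_X]=0$. Moreover, $\mathrm{tr}(\mathrm{ad}_Y)=0$ for every $Y\in\mathfrak{g}$, so $\mathfrak{g}$ is unimodular and $\mathrm{div}(X)=0$; tracing the equation yields the constant $\mathrm{scal}=n\lambda+|X|^2/m$.

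Next I would derive a scalar identity that must be satisfied by $X$. Pairing $\mathrm{Ric}^m_X=\lambda\langle\cdot,\cdot\rangle$ with $\mathcal{L}_X\langle\cdot,\cdot\rangle$ under the natural inner product on symmetric $2$-tensors, and using $\mathrm{tr}_g\mathcal{L}_X\langle\cdot,\cdot\rangle=2\,\mathrm{div}(X)=0$ and $\mathcal{L}_X\langle\cdot,\cdot\rangle(X,X)=0$, one obtains
\[
\langle\mathrm{Ric},\mathcal{L}_X\langle\cdot,\cdot\rangle\rangle=-\tfrac12\big|\mathcal{L}_X\langle\cdot,\cdot\rangle\big|^2.
\]
On the other hand, constancy of $\mathrm{scal}$ gives $\mathcal{L}_X\mathrm{scal}=0$, and the standard identity $\mathcal{L}_X(\mathrm{tr}_g T)=-\langle\mathcal{L}_X\langle\cdot,\cdot\rangle,T\rangle+\mathrm{tr}_g\mathcal{L}_X T$ applied to $T=\mathrm{Ric}$ yields $\langle\mathrm{Ric},\mathcal{L}_X\langle\cdot,\cdot\rangle\rangle=\mathrm{tr}_g\mathcal{L}_X\mathrm{Ric}=-2\,\mathrm{tr}(R\,\mathrm{ad}_X)$, where $R$ is the Ricci operator. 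Combining,
\[
\mathrm{tr}(R\,\mathrm{ad}_X)=\tfrac14\big|\mathcal{L}_X\langle\cdot,\cdot\rangle\big|^2\ge 0.
\]

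The remaining task is to show $\mathrm{tr}(R\,\mathrm{ad}_X)\le 0$ using the quadratic structure alone, forcing equality and hence $\mathcal{L}_X\langle\cdot,\cdot\rangle=0$. A natural first move is that $R$ is $\langle\cdot,\cdot\rangle$-self-adjoint, so $\phi R$ is $B$-symmetric, while $\mathrm{ad}_X$ is $B$-skew; this gives $\mathrm{tr}(\phi R\,\mathrm{ad}_X)=0$ for free. One then expands $\mathrm{tr}(R\,\mathrm{ad}_X)$ in a $\langle\cdot,\cdot\rangle$-orthonormal frame via a Milnor-type formula for the Ricci tensor on a unimodular Lie group, systematically replaces $\mathrm{ad}^*$ by $-\phi^{-1}\mathrm{ad}\,\phi$, and aims to express the difference $\mathrm{tr}(R\,\mathrm{ad}_X)-\mathrm{tr}(\phi R\,\mathrm{ad}_X)$ as a quantity of the appropriate sign in terms of $\phi$, $\mathrm{ad}_X$, and the structure constants. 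The main obstacle is precisely this comparison: the bi-invariant case ($\phi=I$) is trivial because $\mathrm{ad}_X$ is already $\langle\cdot,\cdot\rangle$-skew, but in the general case one must carefully track how $\phi$ mediates between the $B$-skew adjoint representation and the $\langle\cdot,\cdot\rangle$-orthonormal structure through the Ricci formula. Once this sign is secured, $\big|\mathcal{L}_X\langle\cdot,\cdot\rangle\big|^2=0$, so $\mathcal{L}_X\langle\cdot,\cdot\rangle=0$ and $X$ is a Killing field.
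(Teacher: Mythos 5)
Your reduction of the quasi-Einstein equation to the scalar identity $\mathrm{tr}(R\,\mathrm{ad}_X)=\tfrac14\bigl|\mathcal L_X\langle\cdot,\cdot\rangle\bigr|^2$ is correct, and it is in substance the same algebra the paper performs: trace the equation against $\mathcal L_X\langle\cdot,\cdot\rangle$, use unimodularity to kill the $\lambda$-term and $(\mathcal L_X\langle\cdot,\cdot\rangle)(X,X)=0$ to kill the $X^*\otimes X^*$-term. But the proof does not close: everything hinges on the reverse inequality $\mathrm{tr}(R\,\mathrm{ad}_X)\le 0$, and that is precisely the step you leave open and yourself describe as ``the main obstacle.'' Two of your intermediate moves cannot supply it. First, the identity $\mathcal L_X(\mathrm{tr}_g T)=-\langle\mathcal L_Xg,T\rangle+\mathrm{tr}_g\mathcal L_XT$ applied to $T=\mathrm{Ric}$ is vacuous in this setting: for left-invariant tensors both $\langle\mathrm{Ric},\mathcal L_Xg\rangle$ and $\mathrm{tr}_g\mathcal L_X\mathrm{Ric}$ are identically equal to $-2\,\mathrm{tr}(R\,\mathrm{ad}_X)$, so the constancy of the scalar curvature contributes no new information. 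Second, although $\mathrm{tr}(\phi R\,\mathrm{ad}_X)=0$ does follow for free from the $B$-symmetry of $\phi R$ and the $B$-skewness of $\mathrm{ad}_X$, there is no evident way to compare $\mathrm{tr}(R\,\mathrm{ad}_X)$ with $\mathrm{tr}(\phi R\,\mathrm{ad}_X)$ with a definite sign; you give no argument for this comparison, and I do not see one.

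The ingredient you are missing is the one the paper imports through Lemma~\ref{killingfield1} (from \cite{CZ13}), and it has nothing to do with the quadratic structure beyond unimodularity: on the space $M$ of left-invariant metrics with the natural inner product one has $(\mathrm{grad}\ sc)_Q=-ric_Q$, and since $\phi_t=\exp(t\,\mathrm{ad}X)$ is a one-parameter family of Lie algebra automorphisms, the metric Lie algebras $(\mathfrak g,\phi_t^*\langle\cdot,\cdot\rangle)$ are all isomorphic, so $sc(\phi_t^*\langle\cdot,\cdot\rangle)$ is constant in $t$. Differentiating at $t=0$ gives exactly $\mathrm{tr}(R\,\mathrm{ad}_X)=0$, which combined with your identity forces $\bigl|\mathcal L_X\langle\cdot,\cdot\rangle\bigr|^2=0$, i.e.\ $X$ is Killing. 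Without this variational input (or some genuine substitute for it), your argument establishes only the one-sided inequality and does not prove the theorem.
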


Theorem 1.1 for compact Lie groups is proved in \cite{CZ13}, and there are non-trivial
$m$-quasi-Einstein metrics on homogeneous manifolds \cite{BRJ1,CZ13}. Moreover, compact homogeneous Ricci solitons
are Einstein, which is equivalent with the conclusion that the vector field is a Killing field, by the
work of Petersen-Wylie \cite{PW1} and Perelman \cite{Pere1}. In \cite{Jab1}, Jablonski gives a
new proof. Essentially, Theorem 1.1 for $m$ infinite holds for semisimple Lie groups
by Jablonski's proof.

Next we pay attention to solvable quadratic Lie groups.
Here we study a class of simply connected solvable quadratic Lie groups $G(n)$ for $n\geq 1$, the derived algebras of whose Lie algebras are Heisenberg Lie algebras of dimension $2n+1$, and prove
\begin{theorem}\label{G(n)}
Every solvable quadratic Lie groups $G(n)$ admits infinitely many non-equivalent non-trivial $m$-quasi-Einstein metrics for $m$ finite.
\end{theorem}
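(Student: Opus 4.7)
By Theorem \ref{killingfield}, any $X\in\mathfrak{g}(n)$ for which $\langle\cdot,\cdot\rangle$ is $m$-quasi-Einstein must be a Killing field, so ${\mathfrak L}_X\langle\cdot,\cdot\rangle=0$. Equation \eqref{qEin} then reduces to
\begin{equation*}
\mathrm{Ric}-\tfrac{1}{m}X^*\otimes X^*=\lambda\langle\cdot,\cdot\rangle,
\end{equation*}
so the Ricci operator must have $\lambda$ as eigenvalue on $X^\perp$ and $\lambda+\langle X,X\rangle/m$ on $\mathbb{R}X$. The plan is therefore to exhibit on each $G(n)$ left-invariant metrics whose Ricci spectrum consists of exactly two eigenvalues --- one with multiplicity $\dim\mathfrak{g}(n)-1$ and one with multiplicity $1$ --- such that the distinguished eigendirection is spanned by a nonzero Killing element of $\mathfrak{g}(n)$, and then to run a continuous parameter through the construction.

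\textbf{Setup.} I would fix the double-extension presentation of the quadratic Lie algebra $\mathfrak{g}(n)$: a basis adapted to the decomposition $\mathfrak{g}(n)=V\oplus\mathfrak{h}_{2n+1}$, where the derived algebra $\mathfrak{h}_{2n+1}$ is spanned by a symplectic family $\{X_i,Y_i\}_{i=1}^{n}$ together with its center $\mathbb{R}Z$, and $V$ is a complementary ``vertical'' subspace that pairs dually with $\mathbb{R}Z$ under the invariant quadratic form. Next I would introduce the multi-parameter family of left-invariant metrics that orthogonalises this adapted basis, respects the $U(n)$-type symmetry between the pairs $(X_i,Y_i)$, and depends on the positive norms of the generators. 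A direct verification of $\mathrm{ad}_W+\mathrm{ad}_W^{*}=0$ inside this family should confirm that the only nonzero left-invariant Killing fields of such metrics lie in $V\oplus\mathbb{R}Z$; in particular, the central $Z$ is always Killing.

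\textbf{Computation.} Using Koszul's formula on a left-invariant frame, I would compute $\mathrm{Ric}$ for each metric of the family directly in the chosen basis. Because of the imposed symmetry, the Ricci tensor will be block-diagonal with respect to $(V\oplus\mathbb{R}Z)\oplus\mathrm{span}\{X_i,Y_i\}$ and proportional to the metric on the ``horizontal'' block. The reduced equation then splits into (i) a scalar condition forcing the horizontal Ricci eigenvalue to equal $\lambda$, and (ii) a small matrix equation on $V\oplus\mathbb{R}Z$ asserting $\mathrm{Ric}-\lambda\langle\cdot,\cdot\rangle=\tfrac{1}{m}X^*\otimes X^*$. Taking $X$ proportional to $Z$ leaves one scalar metric parameter free, and each admissible value should produce a genuine triple $(\langle\cdot,\cdot\rangle,X,\lambda)$ with prescribed finite $m>0$.

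\textbf{Non-equivalence and main obstacle.} Distinct values of the free parameter give distinct ratios of the two Ricci eigenvalues; this ratio is a homothety invariant, and every isometry between left-invariant metrics on a simply connected solvable Lie group is the composition of a Lie-algebra automorphism with a rescaling, so the one-parameter family yields infinitely many pairwise non-equivalent solutions. The main obstacle I anticipate is the bookkeeping of the Ricci calculation --- in particular the cross terms produced by the action of $V$ on $\mathfrak{h}_{2n+1}$ --- together with the verification that the admissible parameter region is open and genuinely produces a positive, finite value of $m$; here the compatibility of the $V$-action with the invariant quadratic form, i.e.\ the quadratic Lie structure itself, is exactly what should force the cross terms to organise cleanly.
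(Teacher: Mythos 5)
Your overall strategy coincides with the paper's: Theorem \ref{killingfield} forces $X$ to be a Killing field, the only left-invariant Killing directions turn out to be central, so one takes $X$ proportional to $Z$, writes down a diagonal family of metrics adapted to the quadratic structure, computes $\mathrm{Ric}$ in a left-invariant frame, and solves the resulting algebraic system. Your ansatz (orthogonalising $D$, $Z$, $X_i$, $Y_i$) differs from the paper's (orthogonalising $D\pm Z$, $X_i\pm Y_i$) only by an automorphism of $\mathfrak{g}(n)$ of the form $D\mapsto D+uZ$, so the existence part of your plan is sound, modulo the deferred Ricci computation.

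The genuine gap is in the non-equivalence step. You propose to separate the solutions by the ratio of the two Ricci eigenvalues, but on the entire solution family this ratio is the constant $-n$ and therefore distinguishes nothing. Indeed, once the off-diagonal equation $\mathrm{Ric}(f_1,f_2)+\tfrac{1}{m}\lambda_1\lambda_2=0$ determines $1/m$, a direct computation on the solution set (where $\lambda_{2s+1}^2=\lambda_{2s+2}^2$, the quantities $\lambda_{2s+1}^2/a_s$ are independent of $s$, and $\lambda_1^2\lambda_2^2=\frac{4\sum_s a_s^2}{a_1^2}\lambda_3^4$) gives $\lambda=-\frac{a_1^2(\lambda_1^2+\lambda_2^2)}{2\lambda_3^4}$ and $\frac{|X|^2}{m}=-(n+1)\lambda$, so the Ricci operator has eigenvalues $\lambda$ with multiplicity $2n+1$ and $\lambda+\frac{|X|^2}{m}=-n\lambda$ with multiplicity $1$; the ratio is $-n$ for every member of the family. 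You need an invariant that actually varies. The paper fixes the volume normalization (equivalently, $\lambda_3$ and the product $\lambda_1^2\lambda_2^2$) and observes that the scalar curvature $S=(n+1)\lambda$ is proportional to $\lambda_1^2+\lambda_2^2$, which ranges over an unbounded interval while $\lambda_1^2\lambda_2^2$ stays fixed; this yields infinitely many values of $S$ at equal volume and hence infinitely many pairwise non-isometric solutions. (Your structural claim that every isometry between left-invariant metrics on a simply connected solvable Lie group is an automorphism composed with a rescaling is also not correct as stated, but it is not needed once a genuinely varying curvature invariant is in hand.)
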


In order to prove Theorem~\ref{G(n)}, we first obtain a formula of the Ricci curvature with respect to a left-invariant Riemannian metric which holds for any quadratic Lie group. It is a natural extension of the formula on compact semisimple Lie groups, which is given by Sagle \cite{Sa1} and simpler proved by D'Atri and Ziller in \cite{DZ1}. Based on it, we get a computable formula of the Ricci curvature on quadratic Lie groups, i.e. Theorem~\ref{prop}, which is the fundament of the proof for Theorem~\ref{G(n)}.

\section{The proof of Theorem~\ref{killingfield}}\label{section2}
A quadratic Lie algebra is a Lie algebra $\mathfrak g$ together with a pseudo-Riemannian metric $(\cdot,\cdot)\colon \mathfrak{g}\otimes\mathfrak{g}\to \mathbb{R}$ that is invariant under the adjoint action, i.e.
$$([X,Y],Z)+(Y,[X,Z])=0 \text{ for any } X,Y,Z\in {\mathfrak g}.$$
A Lie group $G$ is called a quadratic Lie group if and only if the Lie algebra $\mathfrak g$ of left-invariant vector fields is a quadratic Lie algebra. It is easy to see that $\mathrm{tr} \mathrm{ad} X=0$ for any $X\in {\mathfrak g}$, i.e. a quadratic Lie group is unimodular.

Let $M$ denote the set of left-invariant Riemannian metrics on a unimodular Lie group $G$. For any left-invariant Riemannian metric $Q$ on $G$, the tangent space $T_QM$ at $Q$ is the set of left-invariant symmetric, bilinear forms on $\mathfrak g$. Define a Riemannian metric on $M$ by
$$(v,w)_Q=\mathrm{tr}\ vw=\sum_iv(e_i, e_i)w(e_i, e_i),$$
where $v,w\in T_QM$ and $\{e_i\}$ is a $Q$-orthonormal basis of $\mathfrak g$.

Let $G$ be a unimodular Lie group with a left-invariant Riemannian metric $\langle\cdot,\cdot\rangle$. Given $Q\in M$, denote by $ric_Q$
and $sc_Q$ the Ricci and scalar curvatures of $(G,Q)$, respectively. The
gradient of the function $sc: M\rightarrow{\mathbb R}$ is
\begin{equation}\label{sc}(\mathrm{grad}\ sc)_Q =-ric_Q\end{equation}
relative to the above Riemannian metric on $M$, see \cite{He1} or \cite{Ni1}.

Assume that $\langle\cdot,\cdot\rangle$ is $m$-quasi-Einstein. That is, there exists a vector field $X$ on $G$ such that $\mathrm{Ric}+\frac{1}{2}{\mathfrak L}_X\langle,\rangle-\frac{1}{m}X^*\otimes X^*=\lambda\langle\cdot,\cdot\rangle$. In addition, assume that $X$ is a left-invariant vector field, i.e. $X\in {\mathfrak g}$. Since $\langle\cdot,\cdot\rangle$ is a left-invariant Riemannian metric, for an orthonormal basis relative to $\langle\cdot,\cdot\rangle$, we have
\begin{equation}\label{killing}
  \mathrm{Ric}=\lambda \mathrm{Id}-\frac{1}{2}[(\mathrm{ad} X)+(\mathrm{ad} X)^t]+\frac{|X|^2}{m}\mathrm{Pr}|_X.
\end{equation}
Here $\mathrm{Pr}|_X$ is the orthogonal projection onto $\mathbb{R}X$.
\begin{lemma}[\cite{CZ13}]\label{killingfield1}
Let $G$ be a unimodular Lie group with a left-invariant Riemannian metric $\langle\cdot,\cdot\rangle$ and the Lie algebra $\mathfrak g$. If $X\in{\mathfrak g}$ and
$\mathrm{Ric}^m_X=\lambda\langle\cdot,\cdot\rangle$, then $X$ is a Killing field.
\end{lemma}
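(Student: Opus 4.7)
The plan is to combine the gradient identity (2.1) with formula (2.2) through a well-chosen one-parameter variation of the metric. Writing $A := (\mathrm{ad} X) + (\mathrm{ad} X)^t$, the conclusion that $X$ is a Killing field is precisely $A = 0$, so it suffices to show $\mathrm{tr}(A^2) = 0$.

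The curve I would use is $Q_t(Y, Z) := \langle \phi_t Y, \phi_t Z\rangle$, where $\phi_t := \exp(t\,\mathrm{ad} X)$ is an automorphism of $\mathfrak{g}$ because $\mathrm{ad} X$ is a derivation. Since $\phi_t$ is simultaneously a linear isometry from $(\mathfrak{g}, Q_t)$ to $(\mathfrak{g}, Q_0)$ and a Lie algebra isomorphism, it carries any $Q_t$-orthonormal basis to a $Q_0$-orthonormal basis with identical structure constants. The scalar curvature of a left-invariant metric depends only on the structure constants in an orthonormal basis, so $sc(Q_t)$ is independent of $t$, and consequently $\frac{d}{dt}\big|_{t=0} sc(Q_t) = 0$. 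A short computation identifies $\dot Q_0 := \frac{d}{dt}\big|_{t=0} Q_t$ with the symmetric bilinear form $(Y, Z) \mapsto \langle AY, Z\rangle$, whose operator form under the identification by $\langle\cdot,\cdot\rangle$ is $A$.

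Invoking (2.1) at $Q_0 = \langle\cdot,\cdot\rangle$ then yields
$$0 = \bigl((\mathrm{grad}\ sc)_{Q_0}, \dot Q_0\bigr)_{Q_0} = -\bigl(ric_{Q_0}, \dot Q_0\bigr)_{Q_0} = -\mathrm{tr}(\mathrm{Ric} \cdot A).$$
Substituting (2.2), the $\lambda\,\mathrm{Id}$ summand contributes $\lambda\,\mathrm{tr}(A) = 2\lambda\,\mathrm{tr}(\mathrm{ad} X)$, which vanishes by unimodularity. The projection summand contributes $\tfrac{|X|^2}{m}\mathrm{tr}(\mathrm{Pr}|_X \cdot A) = \tfrac{1}{m}\langle AX, X\rangle$, and this vanishes because $(\mathrm{ad} X)X = [X, X] = 0$. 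Only $-\tfrac{1}{2}\mathrm{tr}(A^2) = -\tfrac{1}{2}|A|^2$ survives, and setting it to zero forces $A = 0$ since $A$ is symmetric.

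The main obstacle I anticipate is justifying that $sc(Q_t)$ is constant in $t$; this is exactly where one needs $\phi_t$ to be a Lie algebra automorphism rather than an arbitrary linear change of frame, via the structure-constant bookkeeping above. The remainder is an algebraic unpacking in which unimodularity eliminates the \emph{cosmological} term and the antisymmetry $[X, X] = 0$ eliminates the $m$-dependent term.
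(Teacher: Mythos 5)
Your proposal is correct and follows essentially the same route as the paper's own argument: both exploit that $\phi_t=\exp(t\,\mathrm{ad}X)$ gives a curve $\phi_t^*\langle\cdot,\cdot\rangle$ of isometric left-invariant metrics, so $\frac{d}{dt}\big|_{t=0}sc=0$, and then combine the gradient formula (2.1) with the expression (2.2) for $\mathrm{Ric}$ to kill the $\lambda$-term by unimodularity and the $m$-term by $[X,X]=0$, leaving $\mathrm{tr}\bigl([(\mathrm{ad}X)+(\mathrm{ad}X)^t]^2\bigr)=0$. The only cosmetic difference is that you pair $\mathrm{grad}\ sc$ with the symmetrized variation $A$ while the paper pairs it with $\mathrm{ad}X$; these differ by a factor of $2$ and yield the same conclusion.
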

Since a quadratic Lie group is unimodular, we know that Theorem~\ref{killingfield} follows from Lemma \ref{killingfield1}.

\section{Ricci curvature on quadratic Lie groups}
Let $\langle\cdot,\cdot\rangle$ be a left-invariant Riemannian metric on $G$, i.e. $$\langle \nabla_XY,Z\rangle+\langle Y,\nabla_XZ\rangle=0 \text{ for any } X,Y,Z\in {\mathfrak g}.$$ In fact, the Levi-Civita connection corresponding to the left-invariant Riemannian metric $\langle\cdot,\cdot\rangle$ is determined by the following equation
\begin{equation}\label{nabla}
\langle \nabla_XY,Z\rangle=\frac{1}{2}\left\{\langle [X,Y],Z\rangle - \langle [Y,Z],X\rangle + \langle [Z,X],Y\rangle\right\}.
\end{equation}There is a linear map $D$ of $\mathfrak g$ satisfying $$(X,Y)=\langle X,\theta(Y)\rangle \text{ for any } X,Y\in {\mathfrak g}.$$ It follows that $\theta$ is invertible, and symmetric with respect to $\langle\cdot,\cdot\rangle$. Since $\langle\cdot,\cdot \rangle$ is positive definite and $(\cdot,\cdot )$ is symmetric,
by a result in linear algebra, we can choose an orthonormal basis $\{X_1,\cdots,X_n\}$ of $\mathfrak g$ with respect to $\langle\cdot,\cdot\rangle$ such that $$(X_i,X_j)=\lambda_i\delta_{ij},$$ for some $\lambda_i\in{\mathbb R}$. Then $\theta(X_i)=\lambda_i(X_i)$.

By the equation~(\ref{nabla}) and the ad-invariance of $(\cdot,\cdot)$, we have
\begin{eqnarray*}
\langle \nabla_XY,Z\rangle&=&\frac{1}{2}\left\{\langle [X,Y],Z\rangle - \langle [Y,Z],X\rangle + \langle [Z,X],Y\rangle\right\} \\
&=&\frac{1}{2}\{([X,Y],\theta^{-1}Z) - ([Y,Z],\theta^{-1}X) + ([Z,X],\theta^{-1}Y)\}\\
&=&\frac{1}{2}\{(\theta^{-1}[X,Y],Z) - ([\theta^{-1}X,Y],Z) + ([X,\theta^{-1}Y],Z)\}\\
&=&\frac{1}{2}\langle \theta^{-1}[X,Y] - [\theta^{-1}X,Y] + [X,\theta^{-1}Y],\theta Z \rangle.
\end{eqnarray*}
It follows that
\begin{equation}\label{connection}
\nabla_X Y = \frac{1}{2}\{[X,Y] - \theta[\theta^{-1}X,Y] + \theta[X,\theta^{-1}Y]\},
\end{equation}
that is,
$$\nabla_X = \frac{1}{2}\{\mathrm{ad} X - \theta\mathrm{ad} (\theta^{-1}X) + \theta\mathrm{ad} X\theta^{-1}\}.$$

One has a simple formula of the Ricci curvature on a compact semisimple Lie group with respect to a left-invariant metric which was derived in \cite{Sa1}, and a simpler proof is given in \cite{DZ1}. The proof given in \cite{DZ1} is easily extended to a quadratic Lie group. That is,

\begin{lemma}\label{thm}
For any $X,Y\in{\mathfrak g}$, $\mathrm{Ric}(X,Y)=-\mathrm{tr}(\nabla_X-\mathrm{ad} X)(\nabla_Y-\mathrm{ad} Y)$.
\end{lemma}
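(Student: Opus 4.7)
The plan is to mimic the proof of the analogous formula for compact semisimple Lie groups given by D'Atri and Ziller in \cite{DZ1}, replacing the Killing form by the ad-invariant bilinear form $(\cdot,\cdot)$. Their argument uses only the ad-invariance of the background bilinear form, metric compatibility, and unimodularity, all of which are available here once formula (3.2) is established. Since $\mathrm{Ric}$ is symmetric, polarization reduces the claim to the diagonal statement $\mathrm{Ric}(X,X) = -\mathrm{tr}\,\psi_X^{\,2}$, where I abbreviate $\psi_X := \nabla_X - \mathrm{ad}\,X$.

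The torsion-free condition $\nabla_X Y - \nabla_Y X = [X,Y]$ identifies $\psi_X$ with the operator $Z \mapsto \nabla_Z X$. Fixing a $\langle\cdot,\cdot\rangle$-orthonormal basis $\{X_i\}$ of $\mathfrak g$, I would expand
$$\mathrm{Ric}(X,X) = \sum_i \langle R(X_i,X)X, X_i\rangle, \qquad R(X_i,X) = [\nabla_{X_i},\nabla_X] - \nabla_{[X_i,X]},$$
substitute $\nabla_U = \psi_U + \mathrm{ad}\,U$ into each occurrence, and use the Lie homomorphism identity $[\mathrm{ad}\,U,\mathrm{ad}\,V] = \mathrm{ad}[U,V]$ to cancel the pure $\mathrm{ad}$--$\mathrm{ad}$ contributions. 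This leaves six mixed or quadratic terms in $\psi$ and $\mathrm{ad}$. Pairing with $X_i$ and summing over $i$, the skew-symmetry of each $\nabla_{X_i}$ (metric compatibility) together with $\sum_i \nabla_{X_i} X_i = 0$ (unimodularity of the quadratic group, established in Section~\ref{section2}) should annihilate the boundary contributions produced whenever $\nabla_{X_i}$ is moved across $\langle\cdot,\cdot\rangle$.

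The main obstacle is then the careful cancellation of the remaining cross terms of type $\mathrm{tr}(\psi_X\,\mathrm{ad}\,X)$ and $\mathrm{tr}((\mathrm{ad}\,X)\,\psi_X)$. Here I would exploit the adjoint identity $(\mathrm{ad}\,X)^{t} = -\theta\,\mathrm{ad}\,X\,\theta^{-1}$, which is a direct reformulation of the ad-invariance of $(\cdot,\cdot)$, to convert $(\mathrm{ad}\,X)^{t}$ back into $\mathrm{ad}\,X$ up to conjugation by $\theta$. Combined with the cyclicity of trace and formula (3.2), this should produce precisely the cancellations needed, and the sum will collapse to $-\mathrm{tr}\,\psi_X^{\,2}$. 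Polarization in $X$ then delivers the stated bilinear identity $\mathrm{Ric}(X,Y) = -\mathrm{tr}(\psi_X \psi_Y)$.
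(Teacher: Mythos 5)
Your opening move --- using torsion-freeness to identify $\psi_W:=\nabla_W-\mathrm{ad}\,W$ with the operator $V\mapsto\nabla_V W$ --- is exactly the key idea, and it is essentially all the paper uses. But the paper's execution is much more direct than your plan: it does not polarize, does not substitute $\nabla_U=\psi_U+\mathrm{ad}\,U$ into every slot, and never touches $\theta$ or the ad-invariance of $(\cdot,\cdot)$. One simply reads each of the three summands of $V\mapsto R(V,X)Y$ as an operator composition: the first, $V\mapsto\nabla_V(\nabla_XY)$, is the operator $\nabla_{\nabla_XY}-\mathrm{ad}(\nabla_XY)$, whose trace vanishes because $\nabla_Z$ is skew-symmetric (metric compatibility) and $\mathrm{tr}\,\mathrm{ad}\,Z=0$ (unimodularity); the second, $V\mapsto\nabla_X\nabla_VY$, is $\nabla_X\circ\psi_Y$; the third, $V\mapsto\nabla_{[V,X]}Y$, is $-\psi_Y\circ\mathrm{ad}\,X$. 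Hence $\mathrm{Ric}(X,Y)=-\mathrm{tr}(\nabla_X\psi_Y)+\mathrm{tr}(\psi_Y\,\mathrm{ad}\,X)=-\mathrm{tr}\bigl((\nabla_X-\mathrm{ad}\,X)\psi_Y\bigr)$ by cyclicity of the trace, which is the claim. Note that this uses only unimodularity, so invoking formula (3.2) and ad-invariance overstates what is needed.

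The genuine gap in your version sits exactly at ``this should produce precisely the cancellations needed'': the decisive computation is deferred rather than performed, and the mechanism you name for it is off target. The identity $(\mathrm{ad}\,X)^{t}=-\theta\,\mathrm{ad}\,X\,\theta^{-1}$ is correct, but no transpose of $\mathrm{ad}\,X$ ever arises once the three curvature terms are organized as above; the cross terms $\mathrm{tr}(\psi_Y\,\mathrm{ad}\,X)$ and $\mathrm{tr}((\mathrm{ad}\,X)\psi_Y)$ are reconciled by nothing more than $\mathrm{tr}(AB)=\mathrm{tr}(BA)$. Likewise, expanding every $\nabla_U$ as $\psi_U+\mathrm{ad}\,U$ and cancelling the pure $\mathrm{ad}$--$\mathrm{ad}$ part via $[\mathrm{ad}\,U,\mathrm{ad}\,V]=\mathrm{ad}[U,V]$ is harmless but creates the very bookkeeping problem your sketch then fails to resolve. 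Your handling of the first curvature term (skew-symmetry plus $\sum_i\nabla_{X_i}X_i=0$) is fine and equivalent to the paper's; but as written the proposal is a plan whose hardest step rests on an unverified ``should,'' so it does not yet establish the lemma.
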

\begin{proof} For any $X_i \in {\mathfrak g}$, we have
\begin{eqnarray*}
\mathrm{Ric}(X,Y)&=&\mathrm{tr}(X_i\mapsto\nabla_{X_i}\nabla_{X}Y-\nabla_{X}\nabla_{X_i}Y-\nabla_{[X_i,X]}Y)\\
&=&\mathrm{tr}(\nabla_{\nabla_XY}-\mathrm{ad}(\nabla_XY)-\nabla_X\nabla_Y+\nabla_X\mathrm{ad} Y+\nabla_Y\mathrm{ad} X-\mathrm{ad} Y\mathrm{ad} X)\\
&=&\mathrm{tr}(\nabla_{\nabla_XY})-\mathrm{tr}(\mathrm{ad}(\nabla_XY))-\mathrm{tr}(\nabla_X-\mathrm{ad} X)(\nabla_Y-\mathrm{ad} Y).
\end{eqnarray*}

By the left-invariance of $\langle\cdot,\cdot\rangle$, we know that $\nabla_x$ is skew-symmetric with respect to $\langle\cdot,\cdot\rangle$. It follows that $\mathrm{tr}\nabla_Z=0$ for any $Z\in{\mathfrak g}$. In particular, $\mathrm{tr}\nabla_{\nabla_XY}=0$. Since $G$ is quadratic, we have $\mathrm{tr}\mathrm{ad} X=0$ for any $X\in{\mathfrak g}$. In particular, $\mathrm{tr}(\mathrm{ad}(\nabla_XY))=0$. Then the theorem follows.\end{proof}

Assume that $\{X_1,\ldots,X_n\}$ is the above orthonormal basis of ${\mathfrak g}$ with respect to $\langle\cdot,\cdot\rangle$, and $C_{ij}^k$ are the structure constants with respect to the basis. That is,
$$[X_i,X_j]=\sum_{l=1}^n C_{ij}^lX_l.$$
By the ad-invariance of $(\cdot,\cdot)$, we have
\begin{equation}\label{equ1}C_{ij}^l\lambda_l=C_{jl}^i\lambda_i=C_{li}^j\lambda_j.\end{equation}
Let $\mu_i=\frac{1}{\lambda_i}$. Then we have
\begin{eqnarray*}
\nabla_{X_i}X_j&=&\frac{1}{2}([X_i,X_j]-\theta[\theta^{-1}X_i,X_j] + \theta[X_i,\theta^{-1}X_j])\\
&=&\frac{1}{2}(\mathrm{Id}-\mu_i\theta+\mu_j\theta)([X_i,X_j])\\
&=&\frac{1}{2}\sum_{l=1}^n\frac{\mu_l-\mu_i+\mu_j}{\mu_l}C_{ij}^lX_l.
\end{eqnarray*}
Then by Lemma~\ref{thm} and the equation~(\ref{equ1}), we have
\begin{eqnarray*}
\mathrm{Ric}(X_j,X_k)&=&-\mathrm{tr}(\nabla_{X_j}-\mathrm{ad} X_j)(\nabla_{X_k}-\mathrm{ad} X_k)\\
&=&-\sum_{i=1}^n\langle(\nabla_{X_j}-\mathrm{ad} X_j)(\nabla_{X_k}-\mathrm{ad} X_k)X_i,X_i\rangle\\
&=&-\sum_{i=1}^n\langle\{(\frac{1}{2}\sum_{l=1}^n\frac{\mu_l-\mu_k+\mu_i}{\mu_l}-1)C_{ki}^l\}(\nabla_{X_j}-\mathrm{ad} X_j)X_l,
X_i\rangle\\
&=&-\frac{1}{4}\sum_{i=1}^n\sum_{l=1}^n\frac{-\mu_l-\mu_k+\mu_i}{\mu_l}\frac{-\mu_i-\mu_j+\mu_l}{\mu_i}C_{ki}^l
C_{jl}^i \\
&=&\frac{1}{4}\sum_{i=1}^n\sum_{l=1}^n\frac{-\mu_l-\mu_k+\mu_i}{\mu_l}\frac{\mu_l-\mu_j-\mu_i}{\mu_l}C_{ki}^l
C_{ji}^l
\end{eqnarray*}
Furthermore, by the equation~(\ref{equ1}), we know
\begin{eqnarray*}
&&\frac{1}{4}\sum_{i>l}\frac{-\mu_l-\mu_k+\mu_i}{\mu_l}\frac{\mu_l-\mu_j-\mu_i}{\mu_l}C_{ki}^l
C_{ji}^l\\
&=&\frac{1}{4}\sum_{i<l}(-\mu_i-\mu_k+\mu_l)(\mu_i-\mu_j-\mu_l)\frac{C_{kl}^i}{\mu_i}\frac{C_{jl}^i}{\mu_i}\\
&=&\frac{1}{4}\sum_{i<l}(-\mu_i-\mu_k+\mu_l)(\mu_i-\mu_j-\mu_l)\frac{C_{ki}^l}{\mu_l}\frac{C_{ji}^l}{\mu_l}.
\end{eqnarray*}
Thus we have the following theorem.
\begin{theorem}\label{prop}
Let $\{X_1,\ldots,X_n\}$ be as above. Then
$$
\mathrm{Ric}(X_j,X_k)=-\frac{1}{2}\sum_{i<l}((\mu_l-\mu_i)^2-\mu_k\mu_j)\frac{C_{ki}^l}{\mu_l}\frac{C_{ji}^l}{\mu_l}.$$
\end{theorem}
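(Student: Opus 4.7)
The plan is to combine the pieces already assembled in the excerpt and finish with an elementary polynomial identity. Starting from the double sum
$$\mathrm{Ric}(X_j,X_k)=\frac{1}{4}\sum_{i,l=1}^{n}\frac{-\mu_l-\mu_k+\mu_i}{\mu_l}\cdot\frac{\mu_l-\mu_j-\mu_i}{\mu_l}\,C_{ki}^{l}C_{ji}^{l},$$
I would split the range of summation into $i<l$, $i=l$, and $i>l$, and treat each strip separately.

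For the diagonal $i=l$, specializing equation~(\ref{equ1}) to the triple $(k,i,i)$ gives $C_{ki}^{i}\lambda_i=C_{ii}^{k}\lambda_k=0$, so every term in the $i=l$ strip vanishes. The $i>l$ strip has already been reindexed in the excerpt: swapping $i\leftrightarrow l$ and applying~(\ref{equ1}), whose two sign changes on $C_{ki}^{l}$ and $C_{ji}^{l}$ cancel in the product, rewrites it as a sum over $i<l$ with weight $(-\mu_i-\mu_k+\mu_l)(\mu_i-\mu_j-\mu_l)\frac{C_{ki}^{l}}{\mu_l}\frac{C_{ji}^{l}}{\mu_l}$. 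The $i<l$ strip already has the matching denominator $\mu_l^{2}$, so it carries the complementary weight $(-\mu_l-\mu_k+\mu_i)(\mu_l-\mu_j-\mu_i)\frac{C_{ki}^{l}}{\mu_l}\frac{C_{ji}^{l}}{\mu_l}$.

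What remains is a short algebraic identity on the sum of the two bracketed products. Setting $t=\mu_i-\mu_l$, they become $(t-\mu_k)(-t-\mu_j)$ and $(-t-\mu_k)(t-\mu_j)$, whose cross terms cancel and whose sum collapses to $-2t^{2}+2\mu_j\mu_k=-2\bigl[(\mu_l-\mu_i)^{2}-\mu_j\mu_k\bigr]$. Multiplying by the overall prefactor $\tfrac{1}{4}$ yields precisely the coefficient $-\tfrac{1}{2}\bigl[(\mu_l-\mu_i)^{2}-\mu_j\mu_k\bigr]$ asserted in the statement. The main obstacle, such as it is, is purely bookkeeping: one must verify that the antisymmetry $C_{ki}^{l}=-C_{ik}^{l}$ and the three-way identity $C_{ij}^{l}\lambda_l=C_{jl}^{i}\lambda_i=C_{li}^{j}\lambda_j$ are applied consistently when reindexing $i\leftrightarrow l$, so that the $i>l$ contribution lands with the same sign on $C_{ki}^{l}C_{ji}^{l}$ as the $i<l$ contribution rather than cancelling it.
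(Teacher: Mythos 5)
Your proposal is correct and follows essentially the same route as the paper: split the double sum into the $i<l$, $i=l$, and $i>l$ strips, kill the diagonal and fold the $i>l$ strip onto $i<l$ via the cyclic identity $C_{ij}^l\lambda_l=C_{jl}^i\lambda_i=C_{li}^j\lambda_j$ (with the two sign changes cancelling), and finish with the quadratic identity in $t=\mu_i-\mu_l$. The only difference is cosmetic: you make explicit the vanishing of the diagonal terms and the final polynomial collapse, which the paper leaves implicit.
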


\section{$m$-quasi-Einstein metrics on $G(n)$ for $m$ finite}\label{section4}
Let $G$ be a simply connected Lie group with the Lie algebra $\mathfrak g$, where $\{D,X,Y,Z\}$ is a basis of $\mathfrak g$ such that the non-zero brackets are given by
\begin{equation}
[D,X]=X, [D,Y]=-Y, [X,Y]=Z.
\end{equation}
It is easy to check that the symmetric bilinear form on $\mathfrak g$ satisfying
\begin{equation}
(D,Z)=(X,Y)
\end{equation}
is invariant. Thus $G$ is a quadratic Lie group if we take
\begin{equation}
(D,Z)=(X,Y)=\frac{1}{2}\text{ and other brackets equal to zero}.
\end{equation}
Let $e_1=D+Z$, $e_2=D-Z$, $e_3=X+Y$ and $e_4=X-Y$. Consider the left-invariant Riemannian metric $\langle\cdot,\cdot\rangle$ on $G$ defined by
\begin{equation}
\langle e_i,e_j\rangle=\delta_{ij}\lambda_i^2, \lambda_i\not=0, \text{ for any } i,j=1,2,3,4.
\end{equation}
Let $f_i=\frac{e_i}{\lambda_i}$ for any $1\leq i\leq 4$. Then we have
\begin{equation}
\langle f_i,f_j\rangle=\delta_{ij}, (f_1,f_1)=\frac{1}{\lambda_1^2},(f_2,f_2)=-\frac{1}{\lambda_2^2},(f_3,f_3)=\frac{1}{\lambda_3^2},(f_4,f_4)=-\frac{1}{\lambda_4^2},
\end{equation}
and the non-zero structure constants corresponding to the basis $\{f_i\}_{i=1,2,3,4}$ are
\begin{equation}
C_{13}^4=\frac{\lambda_4}{\lambda_1\lambda_3},C_{23}^4=\frac{\lambda_4}{\lambda_2\lambda_3},C_{14}^3=\frac{\lambda_3}{\lambda_1\lambda_4}, C_{24}^3=\frac{\lambda_3}{\lambda_2\lambda_4}, C_{34}^2=\frac{\lambda_2}{\lambda_3\lambda_4},C_{34}^1=-\frac{\lambda_1}{\lambda_3\lambda_4}.
\end{equation}
Any left-invariant Killing vector field with respect to $\langle\cdot,\cdot\rangle$ is of the form \begin{equation}a(\lambda_1f_1-\lambda_2f_2).\end{equation}
By Theorem~\ref{prop}, the Ricci curvatures are given by
\[ \left\{ \begin{aligned}
          & \mathrm{Ric}(f_1,f_3)=\mathrm{Ric}(f_1,f_4)=\mathrm{Ric}(f_2,f_3)=\mathrm{Ric}(f_2,f_4)=\mathrm{Ric}(f_3,f_4)=0,\\
          &\mathrm{Ric}(f_1,f_2)=-\frac{1}{2}((\lambda_3^2+\lambda_4^2)^2+\lambda_1^2\lambda_2^2)\frac{1}{\lambda_1\lambda_2\lambda_3^2\lambda_4^2},\\
          &\mathrm{Ric}(f_1,f_1)=-\frac{1}{2}((\lambda_3^2+\lambda_4^2)^2-\lambda_1^4)\frac{1}{\lambda_1^2\lambda_3^2\lambda_4^2},\\
          &\mathrm{Ric}(f_2,f_2)=-\frac{1}{2}((\lambda_3^2+\lambda_4^2)^2-\lambda_2^4)\frac{1}{\lambda_2^2\lambda_3^2\lambda_4^2},\\
          &\mathrm{Ric}(f_3,f_3)=-\frac{1}{2}((\lambda_1^2+\lambda_4^2)^2-\lambda_3^4)\frac{1}{\lambda_1^2\lambda_3^2\lambda_4^2}-
          \frac{1}{2}((\lambda_2^2-\lambda_4^2)^2-\lambda_3^4)\frac{1}{\lambda_2^2\lambda_3^2\lambda_4^2},\\
             &\mathrm{Ric}(f_4,f_4)=-\frac{1}{2}((\lambda_1^2-\lambda_3^2)^2-\lambda_4^4)\frac{1}{\lambda_1^2\lambda_3^2\lambda_4^2}-
          \frac{1}{2}((\lambda_2^2+\lambda_3^2)^2-\lambda_4^4)\frac{1}{\lambda_2^2\lambda_3^2\lambda_4^2}.
     \end{aligned}
   \right.
\]
Assume that $\langle\cdot,\cdot\rangle$ is $m$-quasi-Einstein for some $X\in {\mathfrak g}$. Then by Theorem~\ref{killingfield}, $X$ is a left-invariant Killing vector field with respect to $\langle\cdot,\cdot\rangle$. That is, \begin{equation}X=a(\lambda_1f_1-\lambda_2f_2) \text{ for some } a .\end{equation} Thus $\langle\cdot,\cdot\rangle$ is $m$-quasi-Einstein with the constant $\lambda$ and $a=1$ if and only if the following equations holds, i.e.
\[ \left\{ \begin{aligned}
          &-\frac{1}{2}((\lambda_3^2+\lambda_4^2)^2+\lambda_1^2\lambda_2^2)\frac{1}{\lambda_1\lambda_2\lambda_3^2\lambda_4^2}+\frac{\lambda_1\lambda_2}{m}=0,\\
          &-\frac{1}{2}((\lambda_3^2+\lambda_4^2)^2-\lambda_1^4)\frac{1}{\lambda_1^2\lambda_3^2\lambda_4^2}-\frac{\lambda_1^2}{m}=\lambda,\\
          &-\frac{1}{2}((\lambda_3^2+\lambda_4^2)^2-\lambda_2^4)\frac{1}{\lambda_2^2\lambda_3^2\lambda_4^2}-\frac{\lambda_2^2}{m}=\lambda,\\
          &-\frac{1}{2}((\lambda_1^2+\lambda_4^2)^2-\lambda_3^4)\frac{1}{\lambda_1^2\lambda_3^2\lambda_4^2}-
          \frac{1}{2}((\lambda_2^2-\lambda_4^2)^2-\lambda_3^4)\frac{1}{\lambda_2^2\lambda_3^2\lambda_4^2}=\lambda,\\
             &-\frac{1}{2}((\lambda_1^2-\lambda_3^2)^2-\lambda_4^4)\frac{1}{\lambda_1^2\lambda_3^2\lambda_4^2}-
          \frac{1}{2}((\lambda_2^2+\lambda_3^2)^2-\lambda_4^4)\frac{1}{\lambda_2^2\lambda_3^2\lambda_4^2}=\lambda.
     \end{aligned}
   \right.
\]
By the first equation, we can represent $m$ by $\{\lambda_i\}_{i=1,2,3,4}$. Put the representation of $m$ into the second and third equations, and then eliminate $\lambda$. Finally we have
\begin{equation}\label{lambda}\lambda_3^2=\lambda_4^2\text{ and }\lambda_1^2\lambda_2^2=4\lambda_4^4.\end{equation}
It is easy to check that $\{\lambda_i\}_{i=1,2,3,4}$ satisfying the equations~(\ref{lambda}) are the solutions of the above equations. On the other hand, non-zero $\{\lambda_i\}_{i=1,2,3,4}$ satisfying the equations~(\ref{lambda}) give an $m$-quasi-Einstein metric on $G$.

For the above case, $[\mathfrak g,\mathfrak g]$ is the Heisenberg Lie algebra of dimension 3. Motivated by the above example, we try to give examples when $[\mathfrak g,\mathfrak g]$ is the Heisenberg Lie algebra of higher dimension.

Let $G(n)$ be a simply connected Lie group with the Lie algebra ${\mathfrak g}(n)$, where $\{D,X_s,Y_s,Z\}_{s=1,\ldots,n}$ is a basis of ${\mathfrak g}(n)$ such that non-zero brackets are given by
\begin{equation}
[D,X_s]=a_sX_s, [D,Y_s]=-a_sY_s, [X_s,Y_s]=Z.
\end{equation}
We can assume that $0<a_1\leq a_2\leq\ldots\leq a_n$ by adjusting the order and the sign of the basis if necessary. It is easy to check that the symmetric bilinear form on ${\mathfrak g}(n)$ satisfying
\begin{equation}
(D,Z)=a_s(X_s,Y_s) \text{ for any } s=1,2,\ldots,n
\end{equation}
is ad-invariant. Thus $G$ is a quadratic Lie group if we take
\begin{equation}
(D,Z)=a_s(X_s,Y_s)=\frac{1}{2}\text{ for any } s \text{ and other brackets equal to zero}.
\end{equation}
Let $e_1=D+Z$, $e_2=D-Z$, $e_{2s+1}=\sqrt{a_s}(X_s+Y_s)$ and $e_{2s+2}=\sqrt{a_s}(X_s-Y_s)$ for any $1\leq s\leq n$. Consider the left-invariant Riemannian metric $\langle\cdot,\cdot\rangle$ on $G(n)$ defined by
\begin{equation}
\langle e_i,e_j\rangle=\delta_{ij}\lambda_i^2, \lambda_i\not=0, \text{ for any } i,j=1,2,\ldots,2n+2.
\end{equation}
Let $f_i=\frac{e_i}{\lambda_i}$ for any $1\leq i\leq 2n+2$. Then we have
\begin{equation}
\langle f_i,f_j\rangle=\delta_{ij}, (f_{2s+1},f_{2s+1})=\frac{1}{\lambda_{2s+1}^2},(f_{2s+2},f_{2s+2})=-\frac{1}{\lambda_{2s+2}^2}, \forall 0\leq s\leq n
\end{equation}
and the non-zero structure constants corresponding to the basis $\{f_i\}_{i=1,\ldots,2n+2}$ are
\begin{eqnarray}
&&C_{1(2s+1)}^{2s+2}=\frac{a_s\lambda_{2s+2}}{\lambda_1\lambda_{2s+1}}, C_{2(2s+1)}^{2s+2}=\frac{a_s\lambda_{2s+2}}{\lambda_2\lambda_{2s+1}}\\
&&C_{1(2s+2)}^{2s+1}=\frac{a_s\lambda_{2s+1}}{\lambda_1\lambda_{(2s+2)}}, C_{2(2s+2)}^{2s+1}=\frac{a_s\lambda_{2s+1}}{\lambda_2\lambda_{(2s+2)}} \\ &&C_{(2s+1)(2s+2)}^2=\frac{a_s\lambda_2}{\lambda_{2s+1}\lambda_{2s+2}},C_{(2s+1)(2s+2)}^1=-\frac{a_s\lambda_1}{\lambda_{2s+1}\lambda_{2s+2}}.
\end{eqnarray}
Any left-invariant Killing vector field with respect to $\langle\cdot,\cdot\rangle$ is of the form \begin{equation}a(\lambda_1f_1-\lambda_2f_2).\end{equation}
By Theorem~\ref{prop}, $\mathrm{Ric}(f_i,f_j)=0$ except $i=j$ or $(i,j)=(1,2)$. Furthermore,
\[ \left\{ \begin{aligned}
          &\mathrm{Ric}(f_1,f_2)=\sum_{s=1}^n-\frac{1}{2}((\lambda_{2s+1}^2+\lambda_{2s+2}^2)^2+\lambda_1^2\lambda_2^2)\frac{a_s^2}{\lambda_1\lambda_2\lambda_{2s+1}^2\lambda_{2s+2}^2},\\
          &\mathrm{Ric}(f_1,f_1)=\sum_{s=1}^n-\frac{1}{2}((\lambda_{2s+1}^2+\lambda_{2s+2}^2)^2-\lambda_1^4)\frac{a_s^2}{\lambda_1^2\lambda_{2s+1}^2\lambda_{2s+2}^2},\\
          &\mathrm{Ric}(f_2,f_2)=\sum_{s=1}^n-\frac{1}{2}((\lambda_{2s+1}^2+\lambda_{2s+2}^2)^2-\lambda_2^4)\frac{a_s^2}{\lambda_2^2\lambda_{2s+1}^2\lambda_{2s+2}^2},
              \end{aligned}
   \right.
\]
and for any $1\leq s\leq n$,
\[ \left\{ \begin{aligned}
          &\mathrm{Ric}(f_{2s+1},f_{2s+1})=-\frac{1}{2}((\lambda_1^2+\lambda_{2s+2}^2)^2-\lambda_{2s+1}^4)\frac{a_s^2}{\lambda_1^2\lambda_{2s+1}^2\lambda_{2s+2}^2} \\
          & \quad\quad\quad\quad\quad\quad\quad\quad -\frac{1}{2}((\lambda_2^2-\lambda_{2s+2}^2)^2-\lambda_{2s+1}^4)\frac{a_s^2}{\lambda_2^2\lambda_{2s+1}^2\lambda_{2s+2}^2},\\
             &\mathrm{Ric}(f_{2s+2},f_{2s+2})=-\frac{1}{2}((\lambda_1^2-\lambda_{2s+1}^2)^2-\lambda_{2s+2}^4)\frac{a_s^2}{\lambda_1^2\lambda_{2s+1}^2\lambda_{2s+2}^2}   \\    & \quad\quad\quad\quad\quad\quad\quad\quad -\frac{1}{2}((\lambda_2^2+\lambda_{2s+1}^2)^2-\lambda_{2s+2}^4)\frac{a_s^2}{\lambda_2^2\lambda_{2s+1}^2\lambda_{2s+2}^2}.
     \end{aligned}
   \right.
\]
Assume that $\langle\cdot,\cdot\rangle$ is $m$-quasi-Einstein for some $X\in {\mathfrak g}(n)$. By Theorem~\ref{killingfield}, $X$ is a left-invariant Killing vector field with respect to $\langle\cdot,\cdot\rangle$. That is, \begin{equation}X=a(\lambda_1f_1-\lambda_2f_2) \text{ for some } a .\end{equation} Assume that $\langle\cdot,\cdot\rangle$ is $m$-quasi-Einstein with the constant $\lambda$ and $a=1$.
Then for any $1\leq s\leq n$, $\mathrm{Ric}(f_{2s+1},f_{2s+1})=\mathrm{Ric}(f_{2s+2},f_{2s+2})$. It follows that
\begin{equation}\label{1}\lambda_{2s+1}^2=\lambda_{2s+2}^2.\end{equation} For any $1\leq i\not=j\leq n$, since $\mathrm{Ric}(f_{2i+1},f_{2i+1})=\mathrm{Ric}(f_{2j+1},f_{2j+1})$, we have
\begin{equation}\label{2}\frac{\lambda_{2i+1}^2}{a_i}=\frac{\lambda_{2j+1}^2}{a_j}.\end{equation}
By the first equation, we can represent $m$ by $\{\lambda_i\}_{i=1,\ldots,2n+2}$, and then put the representation of $m$ into $\mathrm{Ric}^m_X(f_{1},f_{1})$ and $\mathrm{Ric}^m_X(f_{2},f_{2})$. It is easy to check that $$\mathrm{Ric}^m_X(f_{1},f_{1})=\mathrm{Ric}^m_X(f_{2},f_{2}).$$ Since $\mathrm{Ric}^m_X(f_{1},f_{1})=\mathrm{Ric}^m_X(f_{3},f_{3})$, we have
\begin{equation}\label{3}
\lambda_1^2\lambda_2^2=\frac{4{\sum_{s=1}^na_s^2}}{a_1^2}\lambda_3^4.
\end{equation}
It is easy to check that $\{\lambda_i\}_{i=1,\ldots,2n+2}$ satisfying the equations~(\ref{1}), (\ref{2}) and (\ref{3}) are the solutions. On the other hand, non-zero $\{\lambda_i\}_{i=1,\ldots,2n+2}$ satisfying the equations~(\ref{1}), (\ref{2}) and (\ref{3}) give an $m$-quasi-Einstein metric on $G(n)$. It is easy to check that
\begin{equation}
S=\sum_{i}\mathrm{Ric}(f_i,f_i)=-n(\lambda^2_1+\lambda^2_2)(\frac{2}{\lambda^2_1\lambda^2_2}+\frac{a_1^2}{2\lambda_3^4}).
\end{equation}
With respect to the orthonormal basis $\{f_i\}_{i=1,\ldots,2n+2}$, the determinant of the metric matrix with respect to $\langle\cdot,\cdot\rangle$ is $1$. Take $\lambda_3=c\not=0$. Then by the equation~(\ref{3}), $\lambda_1^2\lambda_2^2$ is a constant. It follows that $S$ is based on the choice of $\lambda_1^2$ and $\lambda_2^2$. That is, $G(n)$ admits infinitely many non-equivalent non-trivial $m$-quasi-Einstein metrics for $m$ finite, i.e. Theorem~\ref{G(n)} holds.

\section{Acknowledgments}
This work is supported by NSFC (No. 11001133, No. 11271143, No. 11371155),  and University Special Research
 Fund for Ph.D. Program of China (20124407110001 and 20114407120008). We would like to thank F.H. Zhu for the helpful comments, conversation and suggestions.

\end{document}